\providecommand{\U}[1]{\protect\rule{.1in}{.1in}}
\newtheorem{theorem}{Theorem}
\theoremstyle{plain}
\newtheorem{corollary}{Corollary}
\newtheorem{lemma}{Lemma}
\newtheorem{problem}{Problem}
\newtheorem{proposition}{Proposition}
\numberwithin{equation}{section}
\begin{document}
\title[On the Bohnenblust--Hille inequality]{On the growth of the optimal constants of the multilinear Bohnenblust--Hille inequality}
\author{Daniel Nu\~{n}ez-Alarc\'{o}n and Daniel Pellegrino }
\address{Departamento de Matem\'{a}tica, UFPB, Jo\~{a}o Pessoa, PB, Brazil}
\email{dmpellegrino@gmail.com}
\thanks{2010 Mathematics Subject Classification. 46G25, 47H60}
\keywords{Bohnenblust--Hille inequality}

\begin{abstract}
Let $\left(  K_{n}\right)  _{n=1}^{\infty}$ be the optimal constants
satisfying the multilinear (real or complex) Bohnenblust--Hille inequality.
The exact values of the constants $K_{n}$ are still waiting to be discovered
since eighty years ago; recently, it was proved that $\left(  K_{n}\right)
_{n=1}^{\infty}$ has a subexponential growth. In this note we go a step
further and address the following question: Is it true that
\[
\lim_{n\rightarrow\infty}\left(  K_{n}-K_{n-1}\right)  =0?
\]
Our main result is a Dichotomy Theorem for the constants satisfying the
Bohnenblust--Hille inequality; in particular we show that the answer to the
above problem is essentially positive in a sense that will be clear along the
note. Another consequence of the dichotomy proved in this note is that
$\left(  K_{n}\right)  _{n=1}^{\infty}$ has a kind of subpolynomial growth: if
$p(n)$ is any non-constant polynomial, then $K_{n}$ \textit{is} \textit{not}
asymptotically equal to $p(n).$ Moreover, if
\[
q>\log_{2}\left(  \frac{e^{1-\frac{1}{2}\gamma}}{\sqrt{2}}\right)
\approx0.526,
\]
then
\[
K_{n}\nsim n^{q}.
\]

\end{abstract}
\maketitle

\section{Introduction}

Let $\mathbb{K}$ be the real or complex scalar field. The multilinear
Bohnenblust--Hille inequality (see, for example, \cite{bh, defant, defant2,
Mu} and also \cite{annals} for a polynomial version) asserts that for every
positive integer $n\geq1$ there exists a constant $c_{\mathbb{K},n}$ such
that
\begin{equation}
\left(  \sum\limits_{i_{1},\ldots,i_{n}=1}^{N}\left\vert T(e_{i_{^{1}}}%
,\ldots,e_{i_{n}})\right\vert ^{\frac{2n}{n+1}}\right)  ^{\frac{n+1}{2n}}\leq
c_{\mathbb{K},n}\sup_{z_{1},...,z_{n}\in\mathbb{D}^{N}}\left\vert
T(z_{1},...,z_{n})\right\vert \label{hypp}%
\end{equation}
for all $n$-linear form $T:\mathbb{K}^{N}\times\cdots\times\mathbb{K}%
^{N}\rightarrow\mathbb{K}$ and every positive integer $N$, where $\left(
e_{i}\right)  _{i=1}^{N}$ denotes the canonical basis of $\mathbb{K}^{N}$ and
$\mathbb{D}^{N}$ represents the open unit polydisk in $\mathbb{K}^{N}$. It is
well-known that $c_{\mathbb{K},n}\in\lbrack1,\infty)$ for all $n$ and that the
power $\frac{2n}{n+1}$ is sharp but, on the other hand, the optimal values for
$c_{\mathbb{K},n}$ remain a mystery. To the best of our knowledge the unique
precise information is that $c_{\mathbb{R},2}=\sqrt{2}$ is sharp (see
\cite{diniz2}). The original constants obtained by Bohnenblust and Hille (for
the complex case) are
\[
c_{\mathbb{C},n}=n^{\frac{n+1}{2n}}2^{\frac{n-1}{2}}.
\]
Later, these results were improved to%
\[
c_{\mathbb{C},n}=2^{\frac{n-1}{2}}\text{ (Davie, Kaisjer, 1973 (\cite{d,
Ka}))},
\]
and%
\[
c_{\mathbb{C},n}=\left(  \frac{2}{\sqrt{\pi}}\right)  ^{n-1}\text{
(Qu\'{e}ffelec, 1995 (\cite{Q})).}%
\]
In 2012 (\cite{jfa}) it was proved that the best constants satisfying the
Bohnenblust--Hille inequality have a subexponential growth (for both real and
complex scalars). A step further would be to verify if these optimal constants
have an even better asymptotic behavior. More precisely:

\begin{problem}
Let $\left(  K_{\mathbb{K},n}\right)  _{n=1}^{\infty}$ be the sequence of
optimal constants satisfying the Bohnenblust--Hille inequality. Is it true
that%
\begin{equation}
\lim_{n\rightarrow\infty}\left(  K_{\mathbb{K},n}-K_{\mathbb{K},n-1}\right)
=0? \label{bh}%
\end{equation}

\end{problem}

In this note, among other results, we essentially show that the answer to this
problem is positive. More precisely, as a consequence of our main result
(Dichotomy Theorem) we show that if there exist $L_{1},L_{2}\in$ $\left[
0,\infty\right]  $ so that
\[
L_{1}=\lim_{n\rightarrow\infty}\left(  K_{\mathbb{K},n}-K_{\mathbb{K}%
,n-1}\right)  \text{ and }L_{2}=\lim_{n\rightarrow\infty}\frac{K_{\mathbb{K}%
,2n}}{K_{\mathbb{K},n}}%
\]
then
\[
L_{1}=0
\]
and
\[
L_{2}\in\lbrack1,\frac{e^{1-\frac{1}{2}\gamma}}{\sqrt{2}}],
\]
where $\gamma$ denotes the Euler constant (see (\ref{h77})). The non-existence
of the above limits would be an extremely odd event since there is no reason
for a pathological behavior for the optimal constants $\left(  K_{\mathbb{K}%
,n}\right)  _{n=1}^{\infty}$ satisfying the Bohnenblust--Hille inequality$.$

Another corollary of the Dichotomy Theorem is that the sequence $\left(
K_{\mathbb{K},n}\right)  _{n=1}^{\infty}$ of optimal constants satisfying the
Bohnenblust--Hille inequality \textit{can not} have any kind of polynomial
growth. Also, if
\[
q>\log_{2}\left(  \frac{e^{1-\frac{1}{2}\gamma}}{\sqrt{2}}\right)
\approx0.526,
\]
then%
\[
K_{\mathbb{K},n}\nsim n^{q}.
\]

\section{The Dichotomy Theorem}

From now on our arguments will hold for both real and complex scalars, so we
will use the same notation for both cases. In all this note $\left(
K_{n}\right)  _{n=1}^{\infty}$ denotes the sequence of the optimal constants
satisfying the Bohnenblust--Hille inequality.

From now on we say that a sequence of positive real numbers $\left(
R_{n}\right)  _{n=1}^{\infty}$ is \textit{well-behaved} if there are
$L_{1},L_{2}\in\lbrack0,\infty]$ such that%
\begin{equation}
\lim_{n\rightarrow\infty}\frac{R_{2n}}{R_{n}}=L_{1} \label{um}%
\end{equation}
and%
\begin{equation}
\lim_{n\rightarrow\infty}\left(  R_{n}-R_{n-1}\right)  =L_{2}. \label{dois}%
\end{equation}

Note that the above requirements are quite weak (observe that $L_{1},L_{2}$
may be infinity). So, any sequence of the form%
\begin{align*}
R_{n}  &  =ba^{cn}\text{ for }\left(  a,b,c\right)  \in\lbrack0,\infty
)^{2}\times(-\infty,\infty),\text{ or}\\
R_{n}  &  =ba^{\frac{c}{n}}\text{ for }\left(  a,b,c\right)  \in
\lbrack0,\infty)^{2}\times(-\infty,\infty),\text{ or}\\
R_{n}  &  =bn^{a}\text{ for }\left(  a,b\right)  \in(-\infty,\infty
)\times\lbrack0,\infty)\text{, or}\\
R_{n}  &  =b\log n,\text{ for }b\in(0,\infty)\text{ or}\\
R_{n}  &  =%
%TCIMACRO{\tsum \limits_{j=0}^{k}}%
%BeginExpansion
{\textstyle\sum\limits_{j=0}^{k}}
%EndExpansion
a_{j}n^{j}\text{ with }a_{k}>0
\end{align*}
is well-behaved. Since the elements of $\left(  K_{n}\right)  _{n=1}^{\infty}$
belong to $[1,\infty)$, we will restrict our attention to well-behaved
sequences in $[1,\infty).$ We also remark that, even restricted to sequences
in $[1,\infty),$ the limits (\ref{um}) and (\ref{dois}) are, in fact,
independent. For example%
\begin{equation}
R_{n}:=\left\{
\begin{array}
[c]{c}%
\sqrt{n}\text{, if }n=2^{k}\text{ for some }k,\\
2\sqrt{n},\text{ otherwise}%
\end{array}
\right.  \label{contra}%
\end{equation}
satisfies (\ref{um}) with $L_{1}=\sqrt{2}$ but does not fulfil (\ref{dois}).
On the other hand let, for all positive integers $k>1,$%
\[
B_{k}:=\{2^{k}-1,...,2^{k+1}-2\}.
\]
The sequence
\[
R_{n}:=\left\{
\begin{array}
[c]{c}%
n^{k}\text{, if }n\in B_{k}\text{ for some }k\text{ odd},\\
\left(  \min B_{k}\right)  ^{k}+kn\text{, if }n\in B_{k}\text{ for some
}k\text{ even},
\end{array}
\right.
\]
satisfies (\ref{dois}) but does not satisfy (\ref{um}).

Henceforth the subexponential sequence of constants satisfying the multilinear
Bohnenblust--Hille inequality constructed in \cite{jfa} is denoted by $\left(
C_{n}\right)  _{n=1}^{\infty}.$ Since we are interested in the growth of
$\left(  K_{n}\right)  _{n=1}^{\infty}$, we will restrict our attention to
sequences $\left(  R_{n}\right)  _{n=1}^{\infty}$ so that $1\leq R_{n}\leq
C_{n}$ for all $n$.

Our main result is the following dichotomy:

\bigskip

\textbf{Dichotomy Theorem.} If $1\leq R_{n}\leq C_{n}$ for all $n$, then
exactly one of the following assertions is true:

(i) $\left(  R_{n}\right)  _{n=1}^{\infty}$ is subexponential and not well-behaved.

(ii) $\left(  R_{n}\right)  _{n=1}^{\infty}$ is well-behaved with
\begin{equation}
\lim_{n\rightarrow\infty}\frac{R_{2n}}{R_{n}}\in\lbrack1,\frac{e^{1-\frac
{1}{2}\gamma}}{\sqrt{2}}] \label{i1}%
\end{equation}
and
\begin{equation}
\lim_{n\rightarrow\infty}\left(  R_{n}-R_{n-1}\right)  =0. \label{i2}%
\end{equation}

As a corollary we extract the following information on the optimal constants
$\left(  K_{n}\right)  _{n=1}^{\infty}:$

\begin{corollary}
The optimal constants $\left(  K_{n}\right)  _{n=1}^{\infty}$ satisfying the
Bohnenblust--Hille inequality is

(i) subexponential and not well-behaved

or

(ii) well-behaved with
\[
\lim_{n\rightarrow\infty}\frac{K_{2n}}{K_{n}}\in\lbrack1,\frac{e^{1-\frac
{1}{2}\gamma}}{\sqrt{2}}]
\]
and
\[
\lim_{n\rightarrow\infty}\left(  K_{n}-K_{n-1}\right)  =0.
\]

\end{corollary}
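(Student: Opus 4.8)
The plan is to obtain the corollary as an immediate specialization of the Dichotomy Theorem to the sequence $R_n=K_n$, so the only thing that genuinely needs to be checked is that the hypothesis $1\leq K_n\leq C_n$ holds for every $n$. First I would recall from the introduction that every constant for which the Bohnenblust--Hille inequality (\ref{hypp}) holds lies in $[1,\infty)$; since $K_n$ is by definition the smallest such constant, this forces $K_n\geq1$. Second, the subexponential sequence $\left(C_n\right)_{n=1}^{\infty}$ constructed in \cite{jfa} is, by construction, a sequence of admissible constants for (\ref{hypp}), so optimality of $K_n$ yields $K_n\leq C_n$ for all $n$. Together these give $1\leq K_n\leq C_n$ for every $n$.

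Once the hypothesis is in place, the Dichotomy Theorem applies verbatim with $R_n=K_n$. Its alternative (i) states that $\left(K_n\right)_{n=1}^{\infty}$ is subexponential and not well-behaved, which is exactly alternative (i) of the corollary; its alternative (ii) states that $\left(K_n\right)_{n=1}^{\infty}$ is well-behaved with $\lim_{n\rightarrow\infty}\frac{K_{2n}}{K_n}\in[1,\frac{e^{1-\frac{1}{2}\gamma}}{\sqrt{2}}]$ and $\lim_{n\rightarrow\infty}\left(K_n-K_{n-1}\right)=0$, which is exactly alternative (ii) of the corollary. Since the theorem guarantees that precisely one of the two holds, the same conclusion transfers to $\left(K_n\right)_{n=1}^{\infty}$.

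I do not expect any real obstacle here: the corollary is a direct consequence of the Dichotomy Theorem, and the only step requiring an argument — the two-sided estimate $1\leq K_n\leq C_n$ — is forced by the very definition of $K_n$ as the optimal constant, combined with the already-recorded facts that admissible constants are at least $1$ and that $\left(C_n\right)_{n=1}^{\infty}$ is admissible. Accordingly, the proof of the corollary should consist of these two short verifications followed by an invocation of the theorem.
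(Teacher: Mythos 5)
Your proposal is correct and matches the paper's approach: the corollary is obtained exactly by noting $1\leq K_{n}\leq C_{n}$ (optimality of $K_{n}$ together with admissibility of the constants $C_{n}$ from \cite{jfa}) and then applying the Dichotomy Theorem with $R_{n}=K_{n}$.
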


If (i) is true, then we will have a completely surprising result: the bad
behavior of $\left(  K_{n}\right)  _{n=1}^{\infty}$. On the other hand, if
(ii) is true we will have an almost ultimate and surprising information on the
growth of $\left(  K_{n}\right)  _{n=1}^{\infty}$.

We remark that there exist well-behaved sequences $\left(  R_{n}\right)
_{n=1}^{\infty}$ such that
\[
\lim_{n\rightarrow\infty}\left(  R_{n}-R_{n-1}\right)  =0
\]
but
\[
\lim_{n\rightarrow\infty}\frac{R_{2n}}{R_{n}}\notin\lbrack1,\frac
{e^{1-\frac{1}{2}\gamma}}{\sqrt{2}}].
\]
In fact, since $\frac{e^{1-\frac{1}{2}\gamma}}{\sqrt{2}}\approx1.44$, for
\[
R_{n}=n^{\frac{3}{5}}%
\]
we have
\[
\lim_{n\rightarrow\infty}\left(  R_{n}-R_{n-1}\right)  =0
\]
and
\[
\lim_{n\rightarrow\infty}\frac{R_{2n}}{R_{n}}=2^{\frac{3}{5}}\notin
\lbrack1,\frac{e^{1-\frac{1}{2}\gamma}}{\sqrt{2}}].
\]
On the other hand, as a consequence of our results we observe the simple (but
useful) fact: if $\left(  R_{n}\right)  _{n=1}^{\infty}$ is well-behaved and%
\[
\lim_{n\rightarrow\infty}\frac{R_{2n}}{R_{n}}\in\lbrack1,2),
\]
then necessarily
\[
\lim_{n\rightarrow\infty}\left(  R_{n}-R_{n-1}\right)  =0.
\]
So, \textit{a fortiori}, condition (\ref{i2}) is superfluous.

\section{The proofs}

Henceforth the letter $\gamma$ denotes the Euler constant
\begin{equation}
\gamma=\lim_{m\rightarrow\infty}\left(  \left(  -\log m\right)  +\sum
_{k=1}^{m}\frac{1}{k}\right)  \approx0.577.\label{h77}%
\end{equation}

\begin{lemma}
\label{ll1}If $1\leq R_{n}\leq C_{n}$ for all $n$ and there exists
$L\in\lbrack0,\infty]$ so that
\begin{equation}
L=\lim_{n\rightarrow\infty}\frac{R_{2n}}{R_{n}},\text{ } \label{zz}%
\end{equation}
then%
\[
L\in\lbrack1,\frac{e^{1-\frac{1}{2}\gamma}}{\sqrt{2}}].
\]

\end{lemma}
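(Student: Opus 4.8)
The plan is to extract two one-sided bounds on $L$ from the structural facts we already have about $\left(R_n\right)_{n=1}^\infty$: the lower bound $R_n\ge 1$ and the upper bound $R_n\le C_n$, where $\left(C_n\right)_{n=1}^\infty$ is the explicit subexponential sequence from \cite{jfa}. First I would pin down the asymptotics of the ratio $C_{2n}/C_n$. The sequence $\left(C_n\right)$ built in \cite{jfa} is obtained multiplicatively, essentially of the form $C_n=\prod_{k=2}^{n}A_k$ with the $k$-th factor comparable to $\left(1+\tfrac{1}{k}\right)^{\text{const}}$ times a fixed power of $2$; concretely one has $\log C_n=\tfrac12\sum_{k=1}^{n}\tfrac1k\,\log 2+o(\log n)$-type behaviour after the telescoping is carried out, so that
\[
\log\frac{C_{2n}}{C_n}=\sum_{k=n+1}^{2n}\left(\log A_k\right)\longrightarrow \left(1-\tfrac12\gamma\right)-\tfrac12\log 2,
\]
using $\sum_{k=n+1}^{2n}\tfrac1k\to\log 2$ together with the precise constant bookkeeping in \cite{jfa} and the definition (\ref{h77}) of $\gamma$. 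Exponentiating gives $\lim_{n\to\infty}\tfrac{C_{2n}}{C_n}=\tfrac{e^{1-\frac12\gamma}}{\sqrt{2}}$. (If the convergence of $C_{2n}/C_n$ is only known up to $\limsup$, that is all we need for the upper estimate.)

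With this in hand the upper bound on $L$ is immediate: from $R_{2n}\le C_{2n}$ and $R_n\ge 1$ we get
\[
\frac{R_{2n}}{R_n}\le \frac{C_{2n}}{R_n}\le C_{2n},
\]
which is too crude; instead I pair $R_{2n}\le C_{2n}$ with a lower bound on $R_n$ coming from monotonicity-type information. The cleaner route is: since $R_n\le C_n$ for all $n$ and $R_n\ge 1$, the only way $\tfrac{R_{2n}}{R_n}$ can exceed $\tfrac{C_{2n}}{C_n}$ infinitely often is for $R_n$ to fall strictly below $C_n$ by a factor bounded away from $1$ along a subsequence while $R_{2n}$ stays near $C_{2n}$ — but iterating the ratio hypothesis, $R_{2^m n_0}/R_{n_0}\to L^m$ (by a standard subsequence-composition argument: $\tfrac{R_{2^m n}}{R_n}=\prod_{j=0}^{m-1}\tfrac{R_{2^{j+1}n}}{R_{2^j n}}\to L^m$), so $R_{2^m n_0}\asymp L^m R_{n_0}$, and feeding this into $R_{2^m n_0}\le C_{2^m n_0}$ together with the subexponential growth $C_N=e^{o(N)}$ forces $L^m=e^{o(2^m)}$, hence $L\le \tfrac{e^{1-\frac12\gamma}}{\sqrt{2}}$ once one matches the exact exponential rate of $C$ along the dyadic subsequence. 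Symmetrically, $R_n\ge 1$ combined with $R_{2^m n_0}\asymp L^m R_{n_0}$ shows $L\ge 1$: if $L<1$ then $R_{2^m n_0}\to 0$, contradicting $R_n\ge 1$.

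The main obstacle I anticipate is the upper estimate — specifically, getting the \emph{sharp} constant $\tfrac{e^{1-\frac12\gamma}}{\sqrt{2}}$ rather than a weaker bound. This requires the precise asymptotic rate of $\left(C_n\right)$ from \cite{jfa}, namely an estimate of the form $C_n=\left(\tfrac{e^{1-\frac12\gamma}}{\sqrt2}\right)^{\log_2 n}\cdot n^{o(1)}$ (equivalently $\log C_n=\log_2 n\cdot\log\tfrac{e^{1-\frac12\gamma}}{\sqrt2}+o(\log n)$); one must verify that the hidden factors in the construction of \cite{jfa} are subpolynomial in $n$ so that they do not affect the dyadic ratio limit. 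Once that asymptotic is recorded, plugging $N=2^m n_0$ into $R_N\le C_N$ and taking $m$-th roots yields $L\le \lim_{m}\left(C_{2^m n_0}/R_{n_0}\right)^{1/m}=\tfrac{e^{1-\frac12\gamma}}{\sqrt2}$, completing the proof; the lower bound $L\ge1$ is the easy half. I would also remark that the hypothesis only supplies the existence of the single limit (\ref{zz}), which is exactly what powers the telescoping $R_{2^m n}/R_n\to L^m$, so no extra regularity of $\left(R_n\right)$ is needed.
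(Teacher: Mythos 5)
Your proposal is correct and follows essentially the same route as the paper: the lower bound $L\geq 1$ by iterating $R_{2n}/R_{n}<1-\varepsilon$ along the dyadic subsequence, and the upper bound by combining $R_{n}\leq C_{n}$ with the limit $\lim_{n\rightarrow\infty}C_{2n}/C_{n}=\frac{e^{1-\frac{1}{2}\gamma}}{\sqrt{2}}$ from \cite{jfa} and telescoping both ratios along $2^{l}n_{0}$ (the paper phrases this as $\left(B/A\right)^{l}R_{n_{0}}/C_{n_{0}}\rightarrow\infty$ forcing $R_{N}>C_{N}$, while you take $m$-th roots; these are equivalent). The only caveat is that your intermediate appeal to mere subexponentiality, $L^{m}=e^{o(2^{m})}$, is vacuous and gives nothing, but you correctly discard it in favor of the sharp dyadic rate of $\left(C_{n}\right)$, which is the ingredient the paper actually uses.
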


\begin{proof}
Suppose that $L<1.$ For any $0<\varepsilon<1,$ there is a $N_{0}$ so that%
\[
n\geq N_{0}\Rightarrow\frac{R_{2n}}{R_{n}}<1-\varepsilon.
\]
Arguing by induction we have%
\[
R_{2^{l}N_{0}}<R_{N_{0}}(1-\varepsilon)^{l}%
\]
for all positive integer $l$ and we conclude that%
\[
\lim_{l\rightarrow\infty}R_{2^{l}N_{0}}=0,
\]
which is impossible, since $R_{n}\geq1$ for all $n.$ To simplify the notation,
we will write%
\[
\alpha:=\frac{e^{1-\frac{1}{2}\gamma}}{\sqrt{2}}.
\]
Now let us show that $L>\alpha$ is also not possible. From \cite{jfa} we know
that%
\[
\lim_{n\rightarrow\infty}\frac{C_{2n}}{C_{n}}=\alpha.
\]
We will show that there is a sufficiently large $N$ so that $R_{N}>C_{N}$
(which is a contradiction).

Given a small $0<\varepsilon<L-\alpha$, there is a $n_{0}$ so that%
\[
n\geq n_{0}\Rightarrow\frac{C_{2n}}{C_{n}}<\alpha+\frac{\varepsilon}%
{2}:=A\text{ and }\frac{R_{2n}}{R_{n}}>\alpha+\varepsilon:=B.
\]
Using induction we have%
\begin{align*}
C_{2^{l}n_{0}}  &  <A^{l}C_{n_{0}}\\
R_{2^{l}n_{0}}  &  >B^{l}R_{n_{0}}%
\end{align*}
for all positive integer $l$. Hence%
\[
\frac{R_{2^{l}n_{0}}}{C_{2^{l}n_{0}}}>\frac{B^{l}R_{n_{0}}}{A^{l}C_{n_{0}}%
}=\left(  \frac{B}{A}\right)  ^{l}\frac{R_{n_{0}}}{C_{n_{0}}}.
\]
Since $\frac{B}{A}>1$ we conclude that%
\[
\lim_{l\rightarrow\infty}\left(  \frac{B}{A}\right)  ^{l}\frac{R_{n_{0}}%
}{C_{n_{0}}}=\infty
\]
and thus there is a positive integer $N_{1}$ so that%
\[
\frac{R_{2^{N_{1}}n_{0}}}{C_{2^{N_{1}}n_{0}}}>1,
\]
which is a contradiction.
\end{proof}

\begin{lemma}
If $1\leq R_{n}\leq C_{n}$ for all $n$, and the limit (\ref{zz}) exists, then
there is a positive integer $N_{0}$ so that%
\[
\frac{2^{l}N_{0}}{R_{2^{l}N_{0}}}>\frac{N_{0}}{R_{N_{0}}}\left(  \frac{4}%
{3}\right)  ^{l}%
\]
for all positive integers $l.$
\end{lemma}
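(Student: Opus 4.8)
The plan is to reduce the claim to the upper bound $L\leq\alpha$ already established in Lemma~\ref{ll1}, combined with the numerical fact that $\alpha=\frac{e^{1-\frac12\gamma}}{\sqrt2}\approx1.44<\frac32$. First I would rewrite the desired inequality
\[
\frac{2^{l}N_{0}}{R_{2^{l}N_{0}}}>\frac{N_{0}}{R_{N_{0}}}\left(\frac{4}{3}\right)^{l}
\]
in the equivalent form $\frac{R_{2^{l}N_{0}}}{R_{N_{0}}}<\left(\frac{3}{2}\right)^{l}$; since every quantity involved is strictly positive this rephrasing is legitimate, and it is this latter form that an induction will produce directly.

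Since the limit (\ref{zz}) is assumed to exist, Lemma~\ref{ll1} gives $L\in[1,\alpha]$ with $\alpha<\frac32$. Hence I can fix $\varepsilon>0$ so small that $A:=L+\varepsilon<\frac32$, and by the definition of the limit there is an $n_{0}$ with $\frac{R_{2n}}{R_{n}}<A$ for every $n\geq n_{0}$. Setting $N_{0}:=n_{0}$, a routine induction on $l$ --- at each step $2^{l-1}N_{0}\geq N_{0}\geq n_{0}$, so the inequality defining $n_{0}$ applies --- yields $R_{2^{l}N_{0}}<A^{l}R_{N_{0}}$ for every positive integer $l$.

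It then remains to combine these facts: from $R_{2^{l}N_{0}}<A^{l}R_{N_{0}}$ one obtains
\[
\frac{2^{l}N_{0}}{R_{2^{l}N_{0}}}>\frac{2^{l}N_{0}}{A^{l}R_{N_{0}}}=\frac{N_{0}}{R_{N_{0}}}\left(\frac{2}{A}\right)^{l}>\frac{N_{0}}{R_{N_{0}}}\left(\frac{4}{3}\right)^{l},
\]
the last step because $A<\frac32$ forces $\frac{2}{A}>\frac{4}{3}$, so that $\left(\frac{2}{A}\right)^{l}>\left(\frac{4}{3}\right)^{l}$ for all $l\geq1$. There is no real obstacle here: the only point requiring any care is the strict inequality $\alpha<\frac32$ (which leaves room to spare after the passage to the limit) together with the fact that the induction is strict at every step; everything else is bookkeeping. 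In fact $\frac{4}{3}$ is not special --- it could be replaced by any constant strictly below $2/\alpha\approx1.39$.
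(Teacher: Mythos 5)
Your proof is correct and follows essentially the same route as the paper: invoke Lemma \ref{ll1} to get $L\leq\alpha<\tfrac32$, choose a threshold $N_{0}$ beyond which $\frac{R_{2n}}{R_{n}}$ is bounded by a constant below $\tfrac32$, induct to get $R_{2^{l}N_{0}}<(\text{const})^{l}R_{N_{0}}$, and divide. The only cosmetic difference is that you work with $A=L+\varepsilon<\tfrac32$ while the paper uses $\tfrac32$ itself as the bound; both yield the stated inequality.
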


\begin{proof}
From the previous lemma we have
\[
\lim_{n\rightarrow\infty}\frac{R_{2n}}{R_{n}}=L\in\lbrack1,\frac{e^{1-\frac
{1}{2}\gamma}}{\sqrt{2}}].
\]
So, since%
\[
\frac{e^{1-\frac{1}{2}\gamma}}{\sqrt{2}}<\frac{3}{2},
\]
let us fix $N_{0}$ so that%
\[
\frac{R_{2n}}{R_{n}}<\frac{3}{2}%
\]
for all $n\geq N_{0}.$ Hence, by induction,%
\[
R_{2^{l}N_{0}}<\left(  \frac{3}{2}\right)  ^{l}R_{N_{0}}%
\]
for all positive integer $l$. We conclude that%
\[
\frac{2^{l}N_{0}}{R_{2^{l}N_{0}}}>\frac{2^{l}N_{0}}{\left(  \frac{3}%
{2}\right)  ^{l}R_{N_{0}}}=\frac{N_{0}}{R_{N_{0}}}\left(  \frac{4}{3}\right)
^{l}%
\]
for all $l.$
\end{proof}

\begin{lemma}
If $\left(  R_{n}\right)  _{n=1}^{\infty}$ is well-behaved and $1\leq
R_{n}\leq C_{n}$ for all $n,$ then%
\begin{equation}
\lim_{n\rightarrow\infty}\left(  R_{n}-R_{n-1}\right)  =0 \label{est}%
\end{equation}

\end{lemma}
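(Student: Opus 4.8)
Since $(R_n)$ is well-behaved, both limits $L_1 = \lim_n R_{2n}/R_n$ and $L_2 = \lim_n (R_n - R_{n-1})$ exist in $[0,\infty]$. By Lemma \ref{ll1}, $L_1 \in [1, \alpha]$ where $\alpha = e^{1-\gamma/2}/\sqrt 2 < 3/2$. The goal is to show $L_2 = 0$. The natural strategy is to rule out $L_2 \in (0,\infty]$ by showing that a strictly positive (or infinite) first-difference limit forces at least linear growth of $R_n$, which is incompatible with the sublinear growth of $2^l N_0 / R_{2^l N_0} \to \infty$ forced by $L_1 < 3/2$ (this is exactly the content of the second lemma).

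First I would observe that if $L_2 = \lim_n (R_n - R_{n-1}) > 0$ (allowing $+\infty$), then there is a constant $c > 0$ and an index $m_0$ such that $R_n - R_{n-1} > c$ for all $n \geq m_0$; telescoping gives $R_n > c(n - m_0) + R_{m_0} \geq cn/2$ for all sufficiently large $n$, say $n \geq m_1$. In particular $n/R_n < 2/c$ is bounded for large $n$, so the sequence $(n/R_n)_{n}$ is bounded above. Now apply the second lemma: there is $N_0$ with $\tfrac{2^l N_0}{R_{2^l N_0}} > \tfrac{N_0}{R_{N_0}}\bigl(\tfrac43\bigr)^l$ for every $l$. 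As $l \to \infty$ the right-hand side tends to $+\infty$, so $\tfrac{2^l N_0}{R_{2^l N_0}} \to \infty$ along the subsequence $n = 2^l N_0$. This contradicts the boundedness of $n/R_n$. Hence $L_2 = 0$.

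The only subtlety to handle carefully is the case $L_2 = +\infty$: there one cannot literally pick a finite $c$ as ``the limit'', but the statement $R_n - R_{n-1} \to \infty$ certainly implies $R_n - R_{n-1} > 1$ eventually, which is all that the telescoping argument needs, so the same contradiction applies. I would also note at the start that $L_2$ cannot be negative, since it is a limit of a difference of a sequence bounded below by $1$ that we are about to show is in fact unbounded (or, more simply, if $R_n - R_{n-1} \to L_2 < 0$ then $R_n \to -\infty$, impossible); so only $L_2 \in [0,\infty]$ need be considered and the two cases above exhaust the possibilities $L_2 > 0$.

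I do not expect a genuine obstacle here: the proof is a short contradiction argument built entirely from the two preceding lemmas. The one place to be a little careful is making sure the chosen constant from the first-difference hypothesis is strictly positive and valid for all large $n$ (including the $L_2 = \infty$ case), and that the subsequence $n = 2^l N_0$ used for the contradiction is genuinely unbounded in $n$, which it is. So the final write-up is essentially: assume $L_2 > 0$, telescope to get linear lower growth, deduce $n/R_n$ bounded, contradict the second lemma, conclude $L_2 = 0$.
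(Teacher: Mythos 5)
Your proof is correct and follows essentially the same route as the paper's: assume the first differences have a positive limit, telescope to get linear lower growth of $R_n$, and contradict the conclusion of the second lemma that $\tfrac{2^l N_0}{R_{2^l N_0}}\to\infty$. The only (minor, favorable) difference is that you telescope $R_n$ directly from a fixed index to get $n/R_n$ bounded, where the paper telescopes $R_{2n}-R_n$ and combines it with the ratio bound $R_{2n}/R_n<\tfrac32$; you also spell out the $L_2=\infty$ case, which the paper merely asserts is immediate.
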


\begin{proof}
Let $M:=\lim_{n\rightarrow\infty}\left(  R_{n}-R_{n-1}\right)  .$ The first
(and main) step is to show that $M\notin(0,\infty).$

Suppose that $M\in(0,\infty)$. In this case, from (\ref{est}) there is a
positive integer $N_{1}$ such that%
\[
n\geq N_{1}\Rightarrow R_{n}-R_{n-1}>\frac{M}{2}.
\]
So,
\[
n\geq N_{1}\Rightarrow R_{2n}-R_{n}>\frac{nM}{2}%
\]
Hence%
\[
n\geq N_{1}\Rightarrow\frac{R_{2n}}{R_{n}}-1>\left(  \frac{nM}{2}\right)
\frac{1}{R_{n}}.
\]
From Lemma \ref{ll1} we have
\[
1\leq\lim_{n\rightarrow\infty}\frac{R_{2n}}{R_{n}}\leq\frac{e^{1-\frac{1}%
{2}\gamma}}{\sqrt{2}}.
\]
So, there is a positive integer $N_{2}$ so that%
\[
n\geq N_{2}\Rightarrow\frac{R_{2n}}{R_{n}}<\frac{3}{2}.
\]
Hence if $N_{3}=\max\left\{  N_{1},N_{2}\right\}  $ we have
\begin{equation}
n\geq N_{3}\Rightarrow\frac{1}{2}>\frac{R_{2n}}{R_{n}}-1>\left(  \frac{nM}%
{2}\right)  \frac{1}{R_{n}}. \label{hyz}%
\end{equation}
From the previous lemma we know that there is a $N_{0}$ so that%
\begin{equation}
\frac{2^{l}N_{0}}{R_{2^{l}N_{0}}}>\frac{N_{0}}{R_{N_{0}}}\left(  \frac{4}%
{3}\right)  ^{l} \label{ab1}%
\end{equation}
for all positive integers $l.$

Now we choose a positive integer $l_{0}$ such that%
\[
l>l_{0}\Rightarrow N_{l}:=2^{l}N_{0}>N_{3}.
\]
Hence, from (\ref{hyz}), we know that
\begin{equation}
l\geq l_{0}\Rightarrow N_{l}>N_{3}\Rightarrow\frac{1}{2}>\left(  \frac{N_{l}%
M}{2}\right)  .\frac{1}{R_{N_{l}}}=\left(  \frac{M}{2}\right)  .\frac
{2^{l}N_{0}}{R_{2^{l}N_{0}}}. \label{ab2}%
\end{equation}
But, since
\[
\lim_{l\rightarrow\infty}\frac{N_{0}}{R_{N_{0}}}\left(  \frac{4}{3}\right)
^{l}=\infty,
\]
from (\ref{ab1}) and (\ref{ab2}) we have a contradiction.

The argument to show that $\lim_{n\rightarrow\infty}\left(  R_{n}%
-R_{n-1}\right)  $ can not be infinity is an immediate consequence of the
previous case.
\end{proof}

Note that a simple adaptation of the proof of the above lemmata provides the
following simple but apparently useful general result:

\begin{proposition}
\label{py}If $\left(  R_{n}\right)  _{n=1}^{\infty}$ is well-behaved and
\[
1\leq\lim_{n\rightarrow\infty}\frac{R_{2n}}{R_{n}}<2,
\]
then%
\[
\lim_{n\rightarrow\infty}\left(  R_{n}-R_{n-1}\right)  =0.
\]

\end{proposition}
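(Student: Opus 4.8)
The plan is to mimic the proof of the preceding lemma, replacing the constant $\frac{3}{2}$ (which there came from the bound $\frac{e^{1-\frac{1}{2}\gamma}}{\sqrt{2}}<\frac{3}{2}$) by an arbitrary constant strictly between the hypothesised value of $\lim_{n\to\infty}\frac{R_{2n}}{R_n}$ and $2$. Write $M:=\lim_{n\to\infty}(R_n-R_{n-1})$ and $L:=\lim_{n\to\infty}\frac{R_{2n}}{R_n}\in[1,2)$; the goal is to rule out $M\in(0,\infty]$, after which $M=0$ is forced (note $M\ge 0$ automatically, since $R_n\ge 1$ would otherwise be violated along a subsequence, exactly as in Lemma~\ref{ll1}). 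First I would fix a constant $c$ with $L<c<2$ and a constant $a$ with $\frac{c}{2}<a<1$, for instance $a:=\frac{1}{2}+\frac{c}{4}$; the point of this choice is that $\frac{2}{c}>1$, so one recovers the crucial ``$(4/3)^l\to\infty$''-type blow-up with $\frac{4}{3}$ replaced by $\frac{2}{c}$.

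The key steps, in order, are as follows. Step~1: suppose $M\in(0,\infty)$. Then there is $N_1$ with $R_n-R_{n-1}>\frac{M}{2}$ for $n\ge N_1$, hence $R_{2n}-R_n>\frac{nM}{2}$ and so $\frac{R_{2n}}{R_n}-1>\frac{nM}{2}\cdot\frac{1}{R_n}$ for $n\ge N_1$. Step~2: since $L<c$, there is $N_2$ with $\frac{R_{2n}}{R_n}<c$ for $n\ge N_2$; combining with Step~1, for $n\ge N_3:=\max\{N_1,N_2\}$ we get
\[
c-1>\frac{R_{2n}}{R_n}-1>\Big(\frac{nM}{2}\Big)\frac{1}{R_n}.
\]
Step~3: iterating $\frac{R_{2n}}{R_n}<c$ from a starting index $N_0\ge N_3$ gives $R_{2^lN_0}<c^l R_{N_0}$, hence
\[
\frac{2^lN_0}{R_{2^lN_0}}>\frac{N_0}{R_{N_0}}\Big(\frac{2}{c}\Big)^l\longrightarrow\infty
\]
as $l\to\infty$, because $\frac{2}{c}>1$ (this is exactly the second lemma of the excerpt with $\frac{3}{2}$ replaced by $c$). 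Step~4: apply Step~2 at $n=N_l:=2^lN_0$ for $l$ large enough that $N_l>N_3$; this yields $c-1>\frac{M}{2}\cdot\frac{2^lN_0}{R_{2^lN_0}}$, and the right-hand side tends to $\infty$ by Step~3 — a contradiction. Step~5: the case $M=\infty$ follows \emph{a fortiori}, since $M=\infty$ would in particular give $R_n-R_{n-1}>\frac{1}{2}$ eventually, reducing to (a special case of) the argument above; alternatively, $M=\infty$ is excluded by the well-behaved hypothesis directly as in the last line of the preceding proof.

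I do not expect a genuine obstacle here: the whole point of Proposition~\ref{py} is that the number $\frac{e^{1-\frac12\gamma}}{\sqrt2}\approx1.44$ never entered the argument of the previous lemma in an essential way — all that was used was that it is $<2$, which made $\frac{2}{c}>1$ possible for a suitable iterating constant $c<2$. The only mild point to watch is the bookkeeping of the three constants $c$, $a$, and the threshold $c-1$ in the displayed inequalities, and ensuring the chosen $N_0$ simultaneously exceeds $N_3$ so that both the iteration bound of Step~3 and the inequality of Step~2 are in force along the subsequence $2^lN_0$. If $L=1$ one may of course take any $c\in(1,2)$; no case distinction between $L=1$ and $L\in(1,2)$ is needed. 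Thus the proof is a direct transcription of the proof of the lemma with $\frac{3}{2}$ and $\frac{4}{3}$ replaced, respectively, by $c$ and $\frac{2}{c}$ for an arbitrary $c\in(L,2)$.
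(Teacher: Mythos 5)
Your proof is correct and is precisely the ``simple adaptation of the proof of the above lemmata'' that the paper invokes for Proposition~\ref{py}: replacing $\tfrac{3}{2}$ by an arbitrary $c\in(L,2)$ and $\tfrac{4}{3}$ by $\tfrac{2}{c}$ is exactly the intended argument, and all the steps (the telescoping bound $R_{2n}-R_n>\tfrac{nM}{2}$, the geometric blow-up of $\tfrac{2^lN_0}{R_{2^lN_0}}$, and the reduction of the case $M=\infty$) go through as you describe. The only cosmetic remark is that the auxiliary constant $a=\tfrac12+\tfrac{c}{4}$ you introduce is never used and can be deleted.
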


In particular, this result reinforces that the information (\ref{i1}) implies
(\ref{i2}). Our main result is a straightforward consequence of the previous lemmata:

\begin{theorem}
[Dichotomy]\label{dtt}If $1\leq R_{n}\leq C_{n}$ for all $n,$ exactly one of
the following assertions is true:

(i) $\left(  R_{n}\right)  _{n=1}^{\infty}$ is subexponential and not well-behaved.

(ii) $\left(  R_{n}\right)  _{n=1}^{\infty}$ is well-behaved with
\[
\lim_{n\rightarrow\infty}\frac{R_{2n}}{R_{n}}\in\lbrack1,\frac{e^{1-\frac
{1}{2}\gamma}}{\sqrt{2}}]
\]
and
\[
\lim_{n\rightarrow\infty}\left(  R_{n}-R_{n-1}\right)  =0.
\]

\end{theorem}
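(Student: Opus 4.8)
The proof of the Dichotomy Theorem is essentially a bookkeeping exercise that combines the three preceding lemmas, so the plan is to assemble them rather than to do anything new. First I would observe that every sequence with $1\le R_n\le C_n$ is automatically subexponential, since $\left(C_n\right)_{n=1}^{\infty}$ is subexponential by construction in \cite{jfa} and domination by a subexponential sequence preserves subexponentiality (one checks $\limsup R_n^{1/n}\le\limsup C_n^{1/n}=1$, and $R_n^{1/n}\ge1$ from $R_n\ge1$). Thus the genuine dichotomy is only between ``well-behaved'' and ``not well-behaved,'' and the two alternatives in the statement are mutually exclusive and exhaustive by definition: either $\left(R_n\right)_{n=1}^{\infty}$ fails to be well-behaved, giving (i), or it is well-behaved, in which case I must show it falls into (ii).

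So suppose $\left(R_n\right)_{n=1}^{\infty}$ is well-behaved. By definition this means both limits in \eqref{um} and \eqref{dois} exist in $[0,\infty]$; in particular $L:=\lim_{n\to\infty}R_{2n}/R_n$ exists. Applying Lemma \ref{ll1} directly gives $L\in[1,\frac{e^{1-\frac12\gamma}}{\sqrt2}]$, which is precisely \eqref{i1}. For \eqref{i2}, I would simply invoke the third lemma (the one establishing \eqref{est}): since $\left(R_n\right)_{n=1}^{\infty}$ is well-behaved and $1\le R_n\le C_n$, that lemma yields $\lim_{n\to\infty}(R_n-R_{n-1})=0$. This closes case (ii).

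The only point requiring a word of care is the claim that the two cases are \emph{exactly one} true, i.e. that (i) and (ii) cannot both hold and cannot both fail. They cannot both hold because (i) demands ``not well-behaved'' while (ii) demands ``well-behaved.'' They cannot both fail because ``well-behaved'' versus ``not well-behaved'' is a genuine partition of all sequences; if the sequence is not well-behaved then (since it is subexponential by the first paragraph) (i) holds, and if it is well-behaved then the previous paragraph shows (ii) holds. There is no real obstacle here — if I had to name the most delicate ingredient it would be the upper bound $L\le\frac{e^{1-\frac12\gamma}}{\sqrt2}$ in \eqref{i1}, but that work has already been done inside Lemma \ref{ll1} (via the known limit $C_{2n}/C_n\to\alpha$ from \cite{jfa} and a comparison argument), so the present proof is genuinely immediate. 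I would therefore present it in two or three lines: state that domination by $\left(C_n\right)$ forces subexponentiality, then split on whether the sequence is well-behaved, citing Lemma \ref{ll1} for \eqref{i1} and the lemma proving \eqref{est} for \eqref{i2}.
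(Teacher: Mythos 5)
Your proposal is correct and matches the paper's approach exactly: the paper itself states that the theorem ``is a straightforward consequence of the previous lemmata,'' and your assembly --- subexponentiality from domination by $\left(C_n\right)_{n=1}^{\infty}$, then splitting on well-behavedness and citing Lemma \ref{ll1} for \eqref{i1} and the third lemma for \eqref{i2} --- is precisely that intended argument. If anything, you are slightly more careful than the paper in spelling out why the two alternatives are mutually exclusive and exhaustive.
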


\bigskip

As we have just mentioned (it is a consequence of Proposition \ref{py}), the
information $\lim_{n\rightarrow\infty}\left(  R_{n}-R_{n-1}\right)  =0$ in
(ii) is in fact a consequence of the fact that $\left(  R_{n}\right)
_{n=1}^{\infty}$ is well-behaved with $\lim_{n\rightarrow\infty}\frac{R_{2n}%
}{R_{n}}\in\lbrack1,\frac{e^{1-\frac{1}{2}\gamma}}{\sqrt{2}}].$

In the real case it is known that $\left(  K_{n}\right)  _{n=1}^{\infty}$
satisfies
\[
2^{1-\frac{1}{n}}\leq K_{n}\leq C_{n}%
\]
for all positive integer $n$ (see \cite{diniz2}). It is not difficult to
obtain an example of subexponential and not well-behaved sequence satisfying
the above inequality. For example,%
\[
R_{n}=\left\{
\begin{array}
[c]{c}%
2^{1-\frac{1}{n}}\text{, if }n=2^{k}\text{ for some }k,\\
C_{n}\text{, otherwise.}%
\end{array}
\right.
\]

\section{Final remarks}

It is well-known that the powers $\frac{2n}{n+1}$ in the Bohnenblust--Hille
inequality are sharp; so, it is a common feeling that the optimal constants
from the Bohnenblust--Hille inequality must have an uniform behavior, without
strange fluctuations on their growth. The fact that (i) is fulfilled would,
indeed, be a strongly unexpected result. On the other hand, if (ii) is true
(and we conjecture that this is the case) we would also have a noteworthy
information on the growth of these constants:%
\[
\lim_{n\rightarrow\infty}\left(  K_{n}-K_{n-1}\right)  =0,
\]
and this is also a surprising result in view of the previous known estimates
for the growth of these constants (see \cite{bh, defant, defant2, Mu}).

As we mentioned in the previous section, for the case of real scalars we know that
\begin{equation}
K_{n}\geq2^{1-\frac{1}{n}} \label{est22}%
\end{equation}
and $K_{2}=\sqrt{2}$ (and also that $K_{3}>K_{2})$. On the one hand all known
estimates for the constants in the Bohnenblust--Hille inequality indicate that
we \textquotedblleft probably\textquotedblright\ have $\lim_{n\rightarrow
\infty}K_{n}=\infty;$ but, as a matter of fact, we do not know any proof that
the sequence $\left(  K_{n}\right)  _{n=1}^{\infty}$ tends to infinity. Also,
the sequence $\left(  2^{1-\frac{1}{n}}\right)  _{n=1}^{\infty}$ is obviously
well-behaved and it is not completely impossible that the above estimates
(\ref{est22}) are sharp.

As a final remark mention that a consequence of the Dichotomy Theorem asserts
that if
\begin{equation}
q\in\mathbb{R}-\left[  0,\beta\right]  \label{gg}%
\end{equation}
with%
\[
\beta:=\log_{2}\left(  \frac{e^{1-\frac{1}{2}\gamma}}{\sqrt{2}}\right)
\approx0.526
\]
and $c\in(0,\infty)$, then the sequence $\left(  K_{n}\right)  _{n=1}^{\infty
}$ \textit{can not} be of the form%
\[
K_{n}\sim cn^{q}.
\]
In fact, denoting $B_{n}^{(q)}=cn^{q}$ we have
\begin{align*}
\lim_{n\rightarrow\infty}\frac{K_{2n}}{K_{n}}  &  =\lim_{n\rightarrow\infty
}\left(  \frac{B_{2n}^{(q)}}{B_{n}^{(q)}}.\frac{K_{2n}}{B_{2n}^{(q)}}%
.\frac{B_{n}^{(q)}}{K_{n}}\right) \\
&  =\lim_{n\rightarrow\infty}\left(  \frac{B_{2n}^{(q)}}{B_{n}^{(p)}}\right)
.\lim_{n\rightarrow\infty}\left(  \frac{K_{2n}}{B_{2n}^{(q)}}\right)
.\lim_{n\rightarrow\infty}\left(  \frac{B_{n}^{(q)}}{K_{n}}\right) \\
&  =2^{q}.
\end{align*}
Since $q\in\mathbb{R}-\left[  0,\log_{2}\left(  \frac{e^{1-\frac{1}{2}\gamma}%
}{\sqrt{2}}\right)  \right]  ,$ we have
\[
2^{q}\notin\lbrack1,\frac{e^{1-\frac{1}{2}\gamma}}{\sqrt{2}}]
\]
and it contradicts the Dichotomy Theorem. The case $q<0$ is in fact impossible
since we know that $K_{n}$ belongs to $[1,\infty).$

A similar reasoning shows that if $p(n)$ is any non-constant polynomial, then
\[
K_{n}\nsim p(n).
\]
\bigskip

\end{document}